\begin{document}
\newtheorem{satz}{Theorem}
\newtheorem{lemma}[satz]{Lemma}
\newtheorem{prop}[satz]{Proposition}
\newtheorem{kor}[satz]{Corollary}
\theoremstyle{definition}
\newtheorem{defi}{Definition}

\newcommand{\cc}{{\mathbb{C}}}   
\newcommand{\ff}{{\mathbb{F}}}  
\newcommand{\nn}{{\mathbb{N}}}   
\newcommand{\qq}{{\mathbb{Q}}}  
\newcommand{\rr}{{\mathbb{R}}}   
\newcommand{\zz}{{\mathbb{Z}}}  
\newcommand{\In}{\subseteq}      
\newcommand{\sep}{\mathbin:}    
\newcommand{\Hl}{\operatorname{H}}
\author{Joachim K\"onig} 
\title{A family of polynomials with Galois group $PSL_5(2)$ over $\qq(t)$} 
\date{} 
\maketitle
{\abstract{We compute a family of coverings with four ramification points, defined over $\qq$, with regular Galois group $PSL_5(2)$.\\
On the one hand, this is (to my knowledge) the first explicit polynomial with group $PSL_5(2)$ over $\qq(t)$. On the other hand, it also positively answers the question whether $PSL_5(2)$ is the monodromy group of a rational function over $\qq$. At least this does not follow from considering class triples in $PSL_5(2)$, as there are no rigid, rational genus-zero triples. Also, for 4-tuples, our family is the only one with a Hurwitz curve of genus zero (see however below for the question whether this curve can be defined as a rational curve over $\qq$). There are also genus zero families with five branch points (cf. the table at the end of \cite{MSV}), and maybe their Hurwitz spaces can be shown to have rational points by arguments as in \cite{De}; however, so far I have not seen such arguments.}}
\\
\\
Let $G=PSL_5(2)$, and denote by $2A$ the class of involutions of cycle type $(2^8.1^{15})$,by $3B$ the class of elements of order 3 with cycle type $(3^{10}.1)$ in $G$, and by $8A$ the unique class of elements of order 8 in $G$ (of cycle type $(8^2.4^3.2.1)$). \\
We consider the set $SNi(C)$ ("straight Nielsen class", cf. e.g. \cite{DF2}) of class tuples of length 4, of type $(2A,2A,3B,8A)$ in $G=PSL(5,2)$, generating $G$ and having product $1$, i.e. \[SNi(C):=\{(\sigma_1,...,\sigma_4) \in G\mid \sigma_1,\sigma_2 \in 2A, \sigma_3\in 3B, \sigma_4 \in 8A, \langle \sigma_1,...,\sigma_4\rangle = G, \sigma_1 \cdots \sigma_4 = 1\}\]
The action of the braid group on $SNi(C)$ (cf. \cite{MM} or \cite{FV} for introductions to this topic) yields the following:\\
There is a family of covers $\mathcal{T} \mapsto \mathcal{C} \times \mathbb{P}^1\mathbb{C}$, where $\mathcal{C}$ ($C_2$-symmetrized reduced Hurwitz space) is an absolutely irreducible curve of genus zero and for every $h \in \mathcal{C}$ the corresponding fiber cover is a Galois cover of $\mathbb{P}^1\mathbb{C}$ with Galois group $PSL_5(2)$.
\\
\\
Although the usual braid genus criteria yield that the $C_2$-symmetrized Hurwitz space for this family is a genus-zero curve, it does not seem clear the standard theoretic considerations (e.g. odd cycle argument for the braid group generators, as in \cite[Chapter III. 7.4.]{MM}) whether it can also be defined as a rational curve over $\qq$. 
\\
In particular, the cycle structure of the braid orbit generators acting on the Nielsen class does not yield any places of odd degree. More precisely, the image of the braid group is imprimitive on then 24 points, with 12 blocks of length 2 (i.e. if $F|\qq(t)$ is the corresponding function field extension, of degree 24, we have an inclusion $\qq(t)\subset E \subset F$, with $[E:\qq(t)]=12$ and $[F:E]=2$). As the images in the action on the blocks of the three braids defining the ramification structure of these fields have cycle structure $(4^2.3.1)$, $(7.3.2)$ and $(2^5.1^2)$ respectively, it is clear that $E$ is still a rational function field; however the cycle structure of the latter involution in the action on 24 points is $(2^{12})$, so it is possible that a degree-2 place of $E$ ramifies in $F$, in which case the rationality of $F$ is not guaranteed.
\footnote{Closer group theoretic examination yields some evidence for prime divisors of odd degree: namely, the two $3$-cycles of the braid group generator of cycle structure $(7^2.3^2.2^2)$ correspond to degenerate covers with three ramification points, generating two isomorphic, but {\it non-conjugate} (in $PSL_5(2)$) subgroups. The same holds for the two $2$-cycles of this braid group generator. The explicit computations show, that the corresponding prime divisors of ramification index 3 and 2 respectively are indeed of degree 1.}
\\ \\
We therefore clarify the rationality of this curve by explicit computation.
We start with a degenerate cover with ramification structure $(2A,21A,8)$, with group $PSL_5(2)$. We solve the corresponding system of equations for the three-point cover modulo a suitable prime, and then lift and retrieve algebraic numbers from the $p$-adic expansions. The triple is rigid, but as the conjugacy class of the element of order 21 is not rational, we obtain a solution over a quadratic number field, namely
\[0=x^{21}\cdot(x-1)^7\cdot(x-a_1)^3-t\cdot(x^2-2\cdot x+a_2)^8\cdot(x^3-2\cdot x^2+a_3\cdot x+a_4)^4\cdot(x-a_5),\]
where 
$(a_1,...,a_5) := (\frac{1}{8}(-\sqrt{-7} + 11), \frac{1}{16}(-\sqrt{-7} + 11),\frac{1}{16}(\sqrt{-7} + 21), \frac{1}{128}(-3\sqrt{-7} - 31),\frac{1}{8}(-\sqrt{-7} + 3)
)$
\\
\\
From this degenerate cover, we developped a cover branched in four points, by complex approximations, using Puiseux expansions.\\
This is a non-trivial part of the computations; I will expand on this more closely in a forthcoming paper.
Similar techniques were outlined by Couveignes in \cite{Cou}.
\\
\\
Let $\mathbb{C}(x)|\mathbb{C}(t)$ be the corresponding field extension of rational function fields for the cover with four branch points. Via Moebius transformations (in $x$ and in $t$) it is possible to assume a defining polynomial
\[f:=f_0(x)^3\cdot (x-3)-t\cdot g_0(x)^8\cdot g_1(x)^4\cdot x,\] where $deg(f_0) = 10$, $deg(g_0)=2$ and $deg(g_1)=3$.
\\
(So we have e.g. assumed the element of order 8 to be the inertia group generator over infinity, and the element of order 3 the one over zero).\\
Also, assume that for some $\lambda \in \cc$ the polynomials
\[f_a:=f_0(x)^3\cdot (x-3)-a\cdot g_0(x)^8\cdot g_1(x)^4\cdot x\]
and \[f_{b}:=f_0(x)^3\cdot (x-3)-b\cdot g_0(x)^8\cdot g_1(x)^4\cdot x\]
(where $a$ and $b$ shall denote the complex zeroes of $x^2+x+\lambda$)
become inseparable in accordance to the elements in the conjugacy class $2A$ .\\
\\
Once we have obtained a complex approximation of such a polynomial $f$,
we now slowly move the coefficient at $x^2$ of the above polynomial $g_1$ to a fixed rational value, and apply Newton iteration to expand the other coefficients with sufficient precision to then retrieve them as algebraic numbers (using the LLL-algorithm). One finds that all the remaining coefficients come to lie in a cubic number field. This already indicates that there is a rational function field of index 3 in the (genus-zero) function field of the Hurwitz space, which would enforce the latter function field to be rational over $\qq$ as well. This will be confirmed by the remaining computations.\\
\\
We now choose a prime $p$ such that the above solution, found over a cubic number field, reduces to an $\mathbb{F}_p$-point.\\
Then we lift this point to sufficiently many $p$-adic solutions (all coalescing modulo $p$), in order to obtain algebraic dependencies between the coefficients\footnote{Alternatively, one could just repeat the process of rational specialization and Newton iteration, as above, sufficiently often, obtaining cubic minimal polynomials for the other coefficients in each case, and then interpolate.}. These dependencies are all of genus zero, and luckily some of them are of very small degree, e.g. if $c_2$ and $c_1$ are the coefficients at $x^2$ resp. $x$ of the polynomial $g_1$, one obtains an equation of degrees 2 and 3 respectively.\\ One can easily find a parameter $\alpha$ for the rational function field defined by this equation, using Riemann-Roch spaces. 
\\
\\
Now, we can express all the coefficients as rational functions in $\alpha$, and obtain the following result:
\begin{satz}
 Let $\alpha, t$ be algebraically independent transcendentals over $\qq$.\\
Define elements $a_1,...,a_{14}$ as follows:\\
\\
\begin{adjustwidth}{-2cm}{-2cm}
\begin{footnotesize}
\[a_1:=2\cdot \frac{\alpha^4 - 37/2\cdot\alpha^3 - 3\cdot\alpha^2 - 109/2\cdot\alpha + 91}{(\alpha-2)\cdot(\alpha+1)^2},\]
\[a_2:=-12\cdot\frac{(\alpha^2 - 5/2\cdot\alpha - 8)\cdot(\alpha^3 + 10\cdot\alpha^2 + 17\cdot\alpha + 44)}{(\alpha-2)\cdot(\alpha+1)\cdot(\alpha+4)},\]
\[a_3:=-8\cdot \frac{\alpha^6 - 93/2\cdot\alpha^5 - 261/2\cdot\alpha^4 - 1343/2\cdot\alpha^3 - 507/2\cdot\alpha^2 - 963\cdot\alpha + 3076}{(\alpha-2)\cdot(\alpha+1)\cdot(\alpha+4)},\]
\[a_4:=62\cdot\frac{\alpha^9 + 159/31\cdot\alpha^8 - 1410/31\cdot\alpha^7 - 3834/31\cdot\alpha^6 - 987\cdot\alpha^5 - 52905/31\cdot\alpha^4 - 72480/31\cdot\alpha^3 - 50196/31\cdot\alpha^2 + 256224/31\cdot\alpha - 56384/31}{(\alpha-2)^2\cdot(\alpha+1)\cdot(\alpha+4)^2},\]
\[a_5:=-36\cdot\frac{\alpha^9 + 241/6\cdot\alpha^8 + 226/3\cdot\alpha^7 + 521/3\cdot\alpha^6 + 1483/3\cdot\alpha^5 - 19319/6\cdot\alpha^4 - 12872/3\cdot\alpha^3 - 58186/3\cdot\alpha^2 - 51344/3\cdot\alpha + 30944/3}{(\alpha-2)^2\cdot(\alpha+4)^2},\]
\[a_6:=-92\cdot\frac{(\alpha + 1)^2\cdot(\alpha^7 - 337/23\cdot\alpha^6 - 237/23\cdot\alpha^5 - 2075/23\cdot\alpha^4 - 7724/23\cdot\alpha^3 + 2166/23\cdot\alpha^2 - 35740/23\cdot\alpha + 44048/23)}{(\alpha-2)^2\cdot(\alpha+4)},\]
\[a_7:=152\cdot\frac{(\alpha + 1)^3\cdot(\alpha^7 + 5/38\cdot\alpha^6 - 489/38\cdot\alpha^5 - 407/38\cdot\alpha^4 - 4367/38\cdot\alpha^3 - 3768/19\cdot\alpha^2 - 2882/19\cdot\alpha - 23192/19)}{(\alpha-2)^2\cdot(\alpha+4)},\]
\[a_8:=-87\cdot\frac{(\alpha + 1)^4\cdot(\alpha^6 + 96/29\cdot\alpha^5 + 66/29\cdot\alpha^4 + 532/29\cdot\alpha^3 - 3/29\cdot\alpha^2 + 300/29\cdot\alpha + 4724/29)}{(\alpha-2)^2},\]
\[a_9:=18\cdot\frac{(\alpha + 1)^6\cdot(\alpha^5 + 53/6\cdot\alpha^4 + 70/3\cdot\alpha^3 + 111/2\cdot\alpha^2 + 376/3\cdot\alpha + 94/3)}{(\alpha-2)^2},\]
\[a_{10}:=-6\cdot\frac{(\alpha+1)^9\cdot(\alpha+4)}{\alpha-2},\]
\[a_{11}:=-(\alpha + 1)^2,\]
\[a_{12}:=\frac{(\alpha + 1)\cdot(\alpha^2 - 16\cdot\alpha - 8)}{(\alpha-2)\cdot(\alpha+4)},\]
\[a_{13}:=-3(\alpha+1)^2,\]
\[a_{14}:=\frac{(\alpha+1)^3\cdot(\alpha+4)}{\alpha-2}.\]
\end{footnotesize}
\end{adjustwidth}
and set 
\[f_0:=x^{10} + a_1x^9 + ... + a_{10},\] 
\[g_0:=x^2-6x+a_{11},\]
\[g_1:=x^3+a_{12} x^2+a_{13} x+a_{14}.\]
Then the polynomial $f(x,\alpha,t):=f_0^3 \cdot (x-3)-t\cdot g_0^8\cdot g_1^4\cdot x$, of degree 31 in $x$, has Galois group $PSL_5(2)$ over $\qq(\alpha,t)$.
\end{satz}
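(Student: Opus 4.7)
My strategy is to verify the ramification structure of the cover $\mathbb{P}^1_x \to \mathbb{P}^1_t$ defined by $f$ over $\qq(\alpha)$, and then to pin down the Galois group via a short group-theoretic argument combined with a specialization check. I begin by computing the discriminant $\mathrm{disc}_x(f) \in \qq(\alpha)[t]$, which by construction should factor as $C(\alpha) \cdot t^{N_0} \cdot Q(\alpha,t)^{N_1}$ for a monic quadratic $Q$ in $t$, exhibiting the four branch points $0,\infty$ and the two roots of $Q$. The cycle type at each branch point is then read off: at $t=0$, the identity $f(x,\alpha,0) = f_0(x)^3(x-3)$ immediately gives $(3^{10}.1)$; at $t=\infty$, a short Newton polygon analysis (the $31-29=2$ roots escaping to infinity as $t\to\infty$ combine into a 2-cycle, while the zeros of $g_0^8 g_1^4 x$ supply the other cycles) yields $(8^2.4^3.2.1)$; at each of the two remaining branch points, verifying that $\gcd_x(f,\partial_x f)$ reduces modulo the corresponding factor of $Q$ to a squarefree polynomial of degree $8$ yields $(2^8.1^{15})$. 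Thus the inertia classes are $(3B, 8A, 2A, 2A)$, matching the Nielsen class $SNi(C)$.

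Once the ramification is verified (and noting that the associated cover is connected, so $f$ is absolutely irreducible), the Galois group $G$ over $\qq(\alpha,t)$ is transitive on $31$ letters, and since $31$ is prime, $G$ is primitive. The primitive permutation groups of degree $31$ are classified: either a solvable subgroup of $\mathrm{AGL}_1(\ff_{31})$, or one of $PSL_5(2), A_{31}, S_{31}$. The element of order $8$ excludes the solvable possibilities (since $|\mathrm{AGL}_1(\ff_{31})|=930$ has no such element), and the evenness of all three non-identity cycle types forces $G \leq A_{31}$. To distinguish $PSL_5(2)$ from $A_{31}$ I would specialize $(\alpha,t)$ to a Hilbertian pair of rational values and verify via a computer algebra system that the Galois group of the resulting polynomial over $\qq$ equals $PSL_5(2)$; Hilbert irreducibility then furnishes $G \supseteq PSL_5(2)$. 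The opposite inclusion is immediate from the analysis of $SNi(C)$ recalled in the excerpt: the cover realized by $f$, having the ramification type just computed, corresponds to a point of the Hurwitz curve $\mathcal{C}$, whose generic fiber has Galois group $PSL_5(2)$ by construction, so $G \leq PSL_5(2)$.

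The main obstacle is the first step, the explicit factorization of $\mathrm{disc}_x(f)$ together with the local analysis at the two algebraic branch points $t_i(\alpha)$: the polynomial has degree $31$ in $x$ with coefficients in $\qq(\alpha)[t]$, so the computations are heavy. However, they are precisely the computations that guided the construction of $f$, and are guaranteed to go through by design. A minor conceptual remark: $PSL_5(2)$ has two inequivalent actions of degree $31$ (swapped by its outer automorphism), but this ambiguity is immaterial for the abstract statement $G \cong PSL_5(2)$.
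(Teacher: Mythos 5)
Your reduction to the primitive groups of degree $31$, the parity argument placing the geometric monodromy group inside $A_{31}$, and the specialization argument for the lower bound $PSL_5(2)\leq G$ are all sound (the first of these is exactly the paper's opening step, via Dedekind reduction). The genuine gap is in your last sentence: the upper bound $G\leq PSL_5(2)$ does not follow from the ramification data. Knowing that the inertia generators have cycle types $(3^{10}.1)$, $(8^2.4^3.2.1)$, $(2^8.1^{15})$, $(2^8.1^{15})$ only tells you that the cover has the same branch cycle description as the members of the Nielsen class $SNi(C)$; it does not place the cover on the Hurwitz curve $\mathcal{C}$, because $\mathcal{C}$ parametrizes covers whose monodromy tuple generates $PSL_5(2)$, whereas a genus-zero tuple with the same cycle types generating $A_{31}$ (all three types are even, and Riemann--Hurwitz is satisfied equally well) would lie on a different Hurwitz space. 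Asserting that $f$ ``corresponds to a point of $\mathcal{C}$'' is therefore precisely what has to be proved, and your argument for $G\leq PSL_5(2)$ is circular. Your CAS check at a Hilbertian specialization cannot rescue this: a specialization only yields a lower bound on $G$ (one cannot certify that a given rational point avoids the thin exceptional set), and certifying that the specialized group \emph{equals} $PSL_5(2)$, rather than merely contains it, already requires a device of the kind your proposal lacks.

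The paper closes this gap with a different idea, which is the real content of its proof: $PSL_5(2)$ has two arithmetically equivalent actions on $31$ points (points and hyperplanes of $\mathbb{P}^4(\ff_2)$), hence an index-$31$ subgroup acting on the $31$ roots with orbits of lengths $15$ and $16$, whereas any intransitive subgroup of $A_{31}$ or $S_{31}$ with orbits $15$ and $16$ has index at least $\binom{31}{15}\gg 31$. Concretely, the author finds two parameter values, $\alpha=0$ and $\alpha=1/2$, giving covers with the same branch locus, pulls back $t$ along the degree-$31$ rational function attached to the $\alpha=0$ cover, and verifies that $f(x,1/2,C_2 t)$ then splits over $\qq(s)$ into factors of degrees $15$ and $16$; this exhibits an index-$31$ intransitive subgroup and so excludes $A_{31}$ and $S_{31}$ for that fiber, and the conclusion propagates to the whole family because the Hurwitz space is irreducible and the chosen fiber is non-degenerate. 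Some factorization or resolvent argument of this kind is unavoidable here, and your proposal does not contain one.
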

\begin{proof}
 Dedekind reduction, together with the list of primitive groups of degree 31, shows that $PSL_5(2)$ must be a subgroup of the Galois group. It therefore suffices to exclude the possibilities $A_{31}$ and $S_{31}$.\\
Multiplying $t$ appropriately, we can assume the covers to be ramified in $t=0, t=\infty$ and the zeroes of $t^2+t+\lambda$, with some parameter $\lambda$. Interpolating through sufficiently many values of $\alpha$ one finds the degree-24 rational function $\lambda=\frac{h_1(\alpha)}{h_2(\alpha)}$ parametrizing the Hurwitz curve. As e.g. $\alpha=0$ and $\alpha=1/2$ yield the same value for $\lambda$, we set $t=C \cdot (\frac{f_0^3 \cdot (x-3)}{g_0^8\cdot g_1^4\cdot x})(s,0)$ (evaluating $x$ to a parameter $s$ of a a rational function field, as well as $\alpha$ to $0$, and multiplying with a suitable constant $C$ to obtain the above condition on the branch points). Then one can check that over $\qq(s)$, the polynomial $f(x,1/2, C_2\cdot t)$ (again for suitable constant $C_2$ to obtain the branch point conditions) splits into two factors of degrees 15 and 16. This means that for this particular point of the family, there is an index-31 subgroup of the Galois group that acts intransitively on the roots. As $PSL_5(2)$ has such a subgroup and $A_{31}$ and $S_{31}$ don't, the Galois group for this particular specialization is $PSL_5(2)$. This specialization corresponds to an unramified point on the (irreducible) Hurwitz spaces, therefore the entire family must belong to the same Hurwitz space and therefore have Galois group $PSL_5(2)$ over $\qq(\alpha,t)$.
\end{proof}
We can now specialize $\alpha$ to any value that does not let two or more ramification points coalesce, to obtain nice polynomials with nice coefficients with group $PSL_5(2)$ over $\qq(t)$.\\
E.g. $\alpha \mapsto 0$ leads to
\[\tilde{f}(x,t):=(x^5 - 95\cdot x^4 - 110\cdot x^3 - 150\cdot x^2 - 75\cdot x - 3)^3\cdot (x^5 + 4\cdot x^4 - 38\cdot x^3 + 56\cdot x^2 + 53\cdot x - 4)^3\cdot(x-3)\]
\[ - t\cdot(x^2 - 6 x - 1)^8\cdot (x^2-x-1)^4\cdot (x+2)^4\cdot x.\]
In fact it can be seen from $\lambda=\frac{h_1(\alpha)}{h_2(\alpha)}$ (as in the proof above) that the only specialized values for $\alpha$ that lead to degenerate covers (with two branch points coalescing) are $\alpha \mapsto -4$, $\alpha \mapsto -1$ and $\alpha \mapsto 2$.
\\
\\
{\bf Remark:}\\
The above proof esentially uses the fact that $PSL_5(2)$ has two arithmetically equivalent actions on 31 points. This can of course be applied to other linear groups, and has e.g. been used in \cite{Ma1} to verify $PSL_2(11)$ (and others) as the Galois group of a family of polynomials. 

\end{document}